\newcommand\D{\text{-}\mathcal{D}}
\newcommand\NTD{\text{-}\mathcal{NTD}}
\definecolor{lightgrey}{rgb}{0.97,0.97,0.97}
\definecolor{lesslightgrey}{rgb}{0.9,0.9,0.9}
\definecolor{red}{rgb}{1., 0., 0.}
\definecolor{green}{rgb}{0., 1., 0.}
\definecolor{darkgreen}{cmyk}{100,0,100,0}
\definecolor{babyblue}{cmyk}{0.196,0,0,0}
\definecolor{azureblue}{cmyk}{0.667,0.333,0,0.4}
\definecolor{midnightblue}{cmyk}{1,0.5,0,0.6}
\definecolor{lightblue}{rgb}{.95, .95, 1.}
\definecolor{darkblue}{rgb}{0.1, 0.1, 0.325}
\definecolor{blue}{rgb}{0., 0., 1.}
\renewcommand{\P}		{\mathbb{P}}
\def\Cpp{C{}\texttt{++}}
\newtheoremstyle{defaulttheoremstyle}
	{}	
	{}	
	{}	
	{}	
	{\itshape}	
	{:\\}	
	{.5em}	
	{}	
\theoremstyle{defaulttheoremstyle}
\newtheorem{corollary}{Corollary}[section]
\newtheorem{definition}{Definition}[section]
\newtheorem{example}{Example}[section]
\newtheorem{proof}{Proof}[section]
\newtheorem{proposition}{Proposition}[section]
\newtheorem{remark}{Remark}[section]
\newtheorem{question}{Question}[section]
\renewenvironment{leftbar}{%
  \MakeFramed {\advance\hsize-\width \FrameRestore}}%
{\endMakeFramed}
\setlist[enumerate]{topsep=0pt,itemsep=-1ex,partopsep=1ex,parsep=1ex}
\newcommand\mathheader[2]	{\texorpdfstring{$\boldsymbol{#1}$}{#2}}
\newlength\mytemplen
\newsavebox\mytempbox
\newcommand\mybluebox{%
    \@ifnextchar[
       {\@mybluebox}%
       {\@mybluebox[0pt]}}
\def\@mybluebox[#1]{%
    \@ifnextchar[
       {\@@mybluebox[#1]}%
       {\@@mybluebox[#1][0pt]}}
\def\@@mybluebox[#1][#2]#3{
    \sbox\mytempbox{#3}%
    \mytemplen\ht\mytempbox%
    \advance\mytemplen #1\relax%
    \ht\mytempbox\mytemplen%
    \mytemplen\dp\mytempbox%
    \advance\mytemplen #2\relax%
    \dp\mytempbox\mytemplen%
	\fcolorbox{darkblue}{lightblue}{\hspace{1em}\usebox{\mytempbox}\hspace{1em}}%
}
\newcommand\simplexfigure[2]{%
	\begin{subfigure}{.33\textwidth}%
		\centering%
		\includegraphics[width=\linewidth]{#1/complex_#2.pdf}%
		\caption{#2\ifnumcomp{#2}{=}{1}{st}{th} step}%
	\end{subfigure}%
}
\newcommand\simplexanimation[3]{
	\pgfmathparse{int(round(( (#3-#2)/2 ))}
	\animategraphics[controls,height=.4\textheight]{\pgfmathresult}{#1/complex_}{#2}{#3}
}
\newcommand\barcodefigure[5]{
	\begin{figure}[H]%
		\centering%
		\ifdefined\activateanimations
			\simplexanimation{#1}{#2}{#3}
		\else
			\begin{minipage}[t][.2\textheight][t]{\textwidth}%
				\simplexfigure{#1}{#2}%
				\pgfmathparse{int(round( #2+(#3-#2)/5 ))}%
				\simplexfigure{#1}{\pgfmathresult}%
				\pgfmathparse{int(round( #2+2*(#3-#2)/5 ))}%
				\simplexfigure{#1}{\pgfmathresult}%
			\end{minipage}
			\begin{minipage}[t][.2\textheight][t]{\textwidth}
				\pgfmathparse{int(round( #2+3*(#3-#2)/5 ))}%
				\simplexfigure{#1}{\pgfmathresult}%
				\pgfmathparse{int(round( #2+4*(#3-#2)/5 ))}%
				\simplexfigure{#1}{\pgfmathresult}%
				\simplexfigure{#1}{#3}%
			\end{minipage}
		\fi
		\begin{subfigure}[t][.55\textheight][t]{\textwidth}%
			\centering%
			\includegraphics[height=.525\textheight]{#1/barcode.pdf}%
			\caption{Barcode}%
		\end{subfigure}
		\caption{#4}
		\label{#5}
	\end{figure}
}
\newcommand\barcodestatstable[7]{
	\begin{table}[H]
		\centering
		\pgfplotstablevertcat{\pointcloudstats}{#3}
		\pgfplotstablevertcat{\pointcloudstats}{#4}
		\pgfplotstablevertcat{\pointcloudstats}{#5}
		\pgfmathparse{int(#2-1)}
		\pgfplotstabletypeset[
			col sep=tab,
			/pgf/number format/fixed, precision=2,
		    every head row/.style={
		        before row={\specialrule{1.5pt}{0pt}{0pt}\rowcolor{lesslightgrey}},
		        after row={\specialrule{1.5pt}{0pt}{0pt}}
		    },
		    every last row/.style={after row={\specialrule{1.5pt}{0pt}{0pt}}},
		    every nth row={#2[+\pgfmathresult]}{after row=\specialrule{1pt}{0pt}{0pt}},
		    after row={\cline{2-6}},
			columns={distance,0,1,2,3,4},
		    create on use/distance/.style={
		        create col/set list={#1}
		    },
		    columns/distance/.style={
		    	column name={Distance},
		    	string type,
		    	column type=|c||
		    },
			columns/0/.style={column name={$H_i$}, column type=c|},
			columns/1/.style={column name={No. of bars}, column type=c},
			columns/2/.style={column name={Avr. bar size}, column type=c},
			columns/3/.style={column name={\normalsize{Min. bar size}}, column type=>{\footnotesize}c},
			columns/4/.style={column name={\normalsize{Max. bar size}}, column type=>{\footnotesize}c|}
		]{\pointcloudstats}
		\caption{#6}
		\label{#7}
	\end{table}
}
\begin{document}

\author[Scott Balchin and Etienne Pillin]{Scott Balchin and Etienne Pillin}
\address{Department of Mathematics\\
University of Leicester\\
University Road, Leicester LE1 7RH, England, United Kingdom}

\title[TDA on Arbitrary Spaces]{Comparing Metrics on Arbitrary Spaces using Topological Data Analysis}

\begin{abstract}
	We use the notion of topological data analysis to compare metrics on data sets.  We provide two different motivating examples for this.  The first of these is a point cloud data set that has $\mathbb{R}^2$ as its ambient space, and is therefore very visual. the second deals with a very abstract space which arises through the study of non-transitive dice.
\end{abstract}

\maketitle

\smallskip
\noindent \textbf{Keywords.} Topological Data Analysis, Non-Transitive Dice, Metrics, Barcodes.

	\section*{Introduction}

Up until now, topological data analysis has mainly been used to study the structure of a data set which is endowed with a distance.  However, to the authors' knowledge, these tools have not been used to study how the structure of data can change when the metric is varied.  This is mainly due to the fact that the main application of topological data analysis lies in computer vision, where the data is naturally given the Euclidean metric (see \cite{barcode} for an overview of the current state of the theory).  It is only when the data in question has no canonical metric or ambient space that one would have to compare how a change in metric can change the structure.

We begin by briefly introducing the main tools of topological data analysis, before applying it to a point cloud in $\mathbb{R}^2$ which we generate.  We then endow this data with three different metrics, namely the Euclidean, taxicab and supremum metrics.  By studying the three different barcodes arising, we can see how changing the metric can affect the global structure of the data.  We present this along with images of how the simplicial complex develops in each metric, giving some visual intuition on what the change in metric is doing.

After this visual example we move on to a more abstract setting. Non-transitive dice are a relatively underdeveloped problem, a handful of people have provided an insight into this understandable, yet highly complex setting (for example see \cite{SchaeferSchweig2012}, \cite{gardner}, \cite{paradox}).  We will introduce the basic requirements to understand problem, making the example more approachable.  There is no canonical metric to put on the space that arises, therefore we suggest a handful and compare the results using topological data analysis.

	\section{Topological Data Analysis}

	\subsection{Simplicial Complexes}

The general motto of topological data analysis is to give a new way to study large data sets.  By converting data into a topological space, one can apply algebraic topology tools to it in order to infer results.  Namely, one usually uses persistent homology to construct a so-called barcode which can help identify noisy and persistent features.  For us to be able to carry out these techniques, we require a distance $d$ on the data, for all data points $x$, $y$ and $z$ we have:

\begin{enumerate}
	\item $d(x,y) \geq 0$
	\item $d(x,y) = d(y,x)$
	\item  $d(x,y) + d(y,z) \geq d(x,z)$
\end{enumerate}

Note that with the above distance, we may have a finite number of points which are indistinguishable as they have distance $0$, however the simplicial methods that we will use will encode this. Even though the axioms above only give a pseudometric, most of our considered distances will be a true metric.  We will construct a series of topological spaces from the data set, namely the Rips complexes (see \cite{rips}).  

\begin{definition}
	Given a finite collection of points $\{x_\alpha\}$, the \emph{Rips complex} $\mathcal{R}_\epsilon$ is the abstract simplicial complex whose $k$-simplices correspond to unordered $(k+1)$-tuples of points $\{x_\alpha\}^k_0$ that are pairwise within distance $\epsilon$.
\end{definition}

\begin{remark}
	One could also use the \v{C}ech complex construction to get a simplicial complex. Computationally speaking, the Rips complexes are simpler to code as they are flag complexes, meaning that their structure is entirely determined by their $1$-skeletons.
\end{remark}

As we vary $\epsilon$, there is a series of inclusions:
\[
	\mathcal{R}_0 \subset \mathcal{R}_{\epsilon_1} \subset \cdots \subset \mathcal{R}_{\epsilon_{\text{max}}}
\]

\begin{remark}
	Note that if we do have a collection of $k$ points such that they are indistinguishable with respect to our distance measure, then they will be encoded in a $k$-simplex at $\epsilon=0$. This justifies our reasoning that we do not need to consider a strict metric on our data set.
\end{remark} 

	\subsection{Simplicial and Persistant Homology}

A key tool in algebraic topology is that of homology (see \cite{hatcher2002algebraic} for the main source on algebraic topology).  In short, the $k$-th homology group of a simplicial complex computes the number of $k$-dimensional holes.  When we have a whole series of complexes, it is interesting to see when new holes form and when old ones are killed off.  Persistent homology is the tool that allows us do just this (see \cite{fds139645} for an overview).  We can track individual holes, and then compute at which distance $\epsilon$ the hole dies, or a new hole is born.  We then encode this data into a barcode.  We now formally introduce these ideas.

\begin{definition}
	Let $S$ be a simplicial complex, a \emph{simplicial $k$-chain} is a linear sum of $k$-simplices
	\[
		\sum^N_{i=1} c_i \sigma^i
	\]
	where $c_i \in \mathbb{Z}$ and $\sigma^i \in S$ is the $i$-th $k$-simplex.  The group of $k$-chains on $S$ is a free abelian group with basis being all of the $k$-simplices. We denote it $C_k$.
\end{definition}

\begin{definition}
	Given a $k$-simplex $\sigma =   \langle v^0, v^1 , \dots , v^k \rangle$, where the $v^i$ are vertices, there is a \emph{boundary operator} $\partial_k : C_k \to C_{k-1}$ which is defined by
	\[
		\partial_k(\sigma) = \sum^k_{i=0} (-1)^i \langle v^0 , \dots , \hat{v^i}, \dots , v^k \rangle
	\]
	where the hat indicates that we delete that vertex.
\end{definition}

\begin{definition}
	The \emph{$k$-th homology group of $S$} is defined to be the quotient
	\[
		H_k(S) = \text{ker}(\partial_k) / \text{im}(\partial_{k+1})
	\]
	Note that this definition is valid as it can be shown that $(C_k,\partial_k)$ is a chain complex.  Finally, we define the \emph{Betti numbers} of $S$ to be
	\[
		\beta_k = \text{rank}\big(H_k(S)\big)
	\]
\end{definition}

Finding the Betti numbers of our simplicial complexes already tells us a lot about the general structure of the data. However, what we really need to know is how the homology changes when we add new simplices, specifically tracking the behaviour of individual holes.  This is achieved by using persistent homology.  We let $\mathcal{R}$ denote our series of Rips complexes.

\begin{definition}
	For $i < j$, the $(i,j)$-persistent homology of $\mathcal{R}$ is the image of the induced homomorphism in homology $H_\ast(\mathcal{R}^i) \to H_\ast(\mathcal{R}^j)$.
\end{definition}

We will track the Betti numbers through persistent homology, which will tell us in which $\epsilon$ interval each Betti number exists.  These intervals are what we are interested in. When plotted, they indicate which holes are noise and which ones are actually relevant to the structure of the data.  We plot it into a barcode, an example of one is given in figure \ref{fig:barcode_example}.

\begin{figure}[ht]
	\centering
	\includegraphics[scale=0.8]{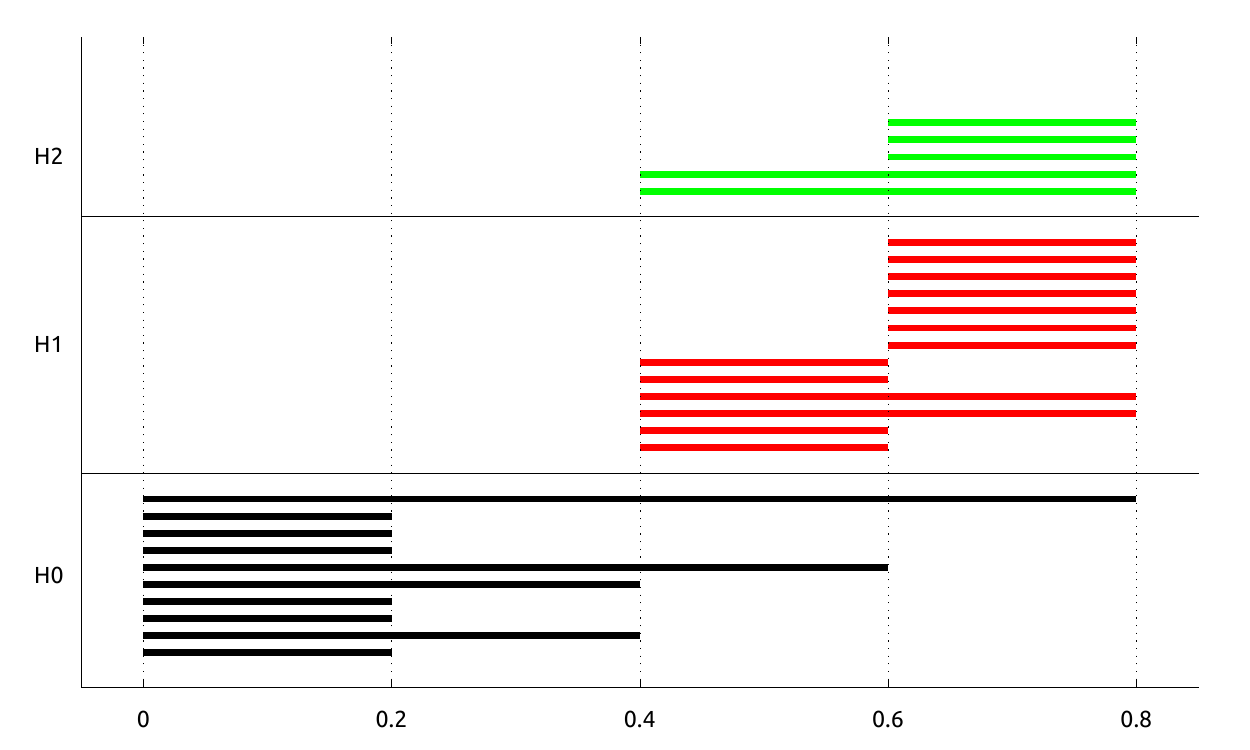}
	\caption{An example of a barcode - The $x$ axis gives the value of $\epsilon$, and the $y$ axis gives the homology degree.  A bar represents the lifespan of a hole in a given homology degree.}
	\label{fig:barcode_example}
\end{figure}

	\subsection{Comparing Metrics With Topological Data Analysis}

The main outlook of this paper is to show that we can use topological data analysis to compare different metrics on a single data space.  We will fix a data set $\mathcal{X}$, which we will endow with a collection of metrics $\{M_i\}_{i \in I}$. We wish to introduce a qualitative measure between metrics.  We can do this by performing topological data analysis on $\mathcal{X}$ with respect to all of the metrics, then compare the resulting barcodes.

It should be noted at this point that there are distances between persistence diagrams, namely the bottleneck and Wasserstein distance \cite{wasser}. This would give us a quantitative measure of the difference between the metrics. In the authors' opinion, this is better suited to calculate the error associated to small perturbations of the data rather than a change in metric.

Instead of using the distances been barcodes, we will give numerical results for each homology class of each metric.  Once we have these numbers we can compare them among the other metrics to get a feel for how similar they are.  The statistics that we will be interested in are the average lifespan of a hole and the number of holes in each dimension. 

\begin{definition}
	Suppose that we have a barcode associated to a data set.  Without loss of generality, we will assume that $\epsilon$ ranges between 0 and 1 (we can do so by diving all distances by the maximum distance). Then:
	\begin{itemize}
		\item[-] The expected lifespan of a hole in dimension $n$ is the average length of the $n$-dimensional bars.
		\item[-] The number of holes in dimension $n$ is the number of bars which are born in the entire span of $\epsilon$.
	\end{itemize}
\end{definition}

	\subsection{Algorithms}

In the above sections we have covered the basic theory of topological data analysis, however this did not give much insight into how one could calculate such invariants of a data set. We represent the distance between the data points as a distance matrix, which by construction is positive valued and symmetric. The global structure of our algorithm is as follows.

\begin{algorithm}[caption=Structure of the code,mathescape]
Build the data set $E$ we want to analyze
Calculate the chosen distance matrix over this space
Make an ordered list of the distance thresholds (from lowest to highest)
For all $\epsilon$ in that list,
	Build the Rips simplicial complex $R_\epsilon$
	Update the simplicial homology and barcode
\end{algorithm}

We will now describe the steps given in the above structure (more can be found on the theory of computational topology in the book of Edelsbrunner and Harer \cite{fds51656}).

	\subsubsection{Building the Rips Simplicial Complex}

Let $N$ be the upper bound on the dimension of the simplicial complex.  We start by building the edges from the list of vertices which are pairwise within a distance below the current threshold $\epsilon$. We then inductively create the list of $(I+1)$-simplices from that of the $I$-simplices, for all $I \in \ldbrack 1, N-1\rdbrack$.

Using an efficient algorithm to do the latter is crucial for the computation to be done within a reasonable amount of time, especially in high dimensions. Let us first consider a naïve algorithm which browses the list of $I$-simplices using  an $(I+1)$-nested iterator \texttt{simplex\_it}. For all combinations of $(I+1)$ $I$-simplices, this iterator checks whether they form a new $(I+1)$-simplex or not. The complexity associated to this step is $n_I^{I+1}$, where $n_I$ is the number of $I$-simplices. Therefore that of the algorithm is
\[
	\sum_{I \in \ldbrack 1, N-1\rdbrack} n_I^{I+1}.
\]

It is not possible to foresee the number of simplices $n_I$. The worst-case scenario, which occurs when $\epsilon \rightarrow \epsilon_\text{max}$, implies that $n_I = {N \choose I+1}$. In particular, assuming $N$ is even, we obtain that the $N/2$ th step of the algorithm is of complexity
\[
	{N \choose I+1}^{N/2}.
\]

Our suggestion for improving this is to avoid browsing simultaneously the same simplices, or the same combinations multiple times. This can be done by ensuring that $\forall i \in \ldbrack 1, I\rdbrack$, \texttt{simplex\_it[i]} $<$ \texttt{simplex\_it[i+1]} . This alone reduces the complexity of the $I$th step to $n_I \choose I+1$. In addition to this, since we build not one simplicial complex, but a series of them, we can store the previous sizes of all the simplex lists at the previous step. We can then make sure that one component of our iterator \texttt{simplex\_it} only browses the $I$-simplices which have just been created.

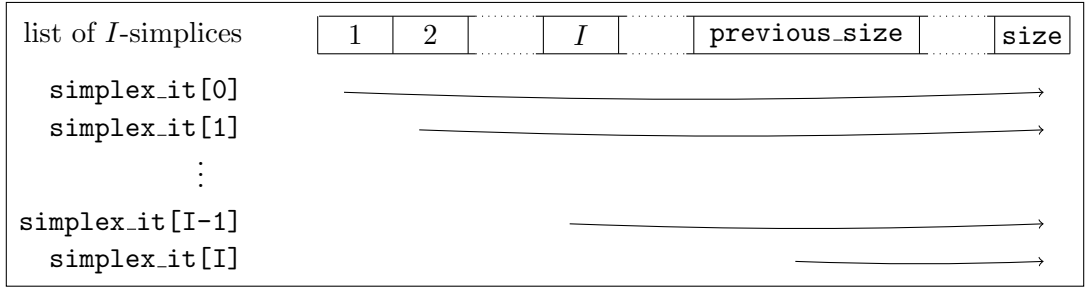
\begin{figure}[ht!]
	\centering
	\begin{tikzpicture}[framed, every node/.style={inner sep=0,outer sep=0}, inner sep = 5pt]
		\node at (0,0) (TL) {};
		\node at (10,0) (TR) {};
		
		\draw[-]
			(TL)--($(TL)+(2.1,0)$)
			($(TL)+(0,-0.5)$)--($(TL)+(2.1,-0.5)$)
			($(TL)+(2.9,0)$)--($(TL)+(4.1,0)$)
			($(TL)+(2.9,-0.5)$)--($(TL)+(4.1,-0.5)$)
			($(TL)+(4.9,0)$)--($(TL)+(8.1,0)$)
			($(TL)+(4.9,-0.5)$)--($(TL)+(8.1,-0.5)$)
			($(TL)+(8.9,0)$)--($(TL)+(10,0)$)
			($(TL)+(8.9,-0.5)$)--($(TL)+(10,-0.5)$);
		\draw[dotted]
			($(TL)+(2.1,0)$)--($(TL)+(2.9,0)$)
			($(TL)+(2.1,-0.5)$)--($(TL)+(2.9,-0.5)$)
			($(TL)+(4.1,0)$)--($(TL)+(4.9,0)$)
			($(TL)+(4.1,-0.5)$)--($(TL)+(4.9,-0.5)$)
			($(TL)+(8.1,0)$)--($(TL)+(8.9,0)$)
			($(TL)+(8.1,-0.5)$)--($(TL)+(8.9,-0.5)$);	
				
		\foreach \i in {0, ..., 5, 8, 9, 10} {
			\draw[-] (\i, 0)--(\i,-0.5);
		}
		
		\node at ($(TL)+(0.5,-0.25)$) {$1$};
		\node at ($(TL)+(1.5,-0.25)$) {$2$};
		\node at ($(TL)+(3.5,-0.25)$) {$I$};
		\node at ($(TL)+(6.5,-0.25)$) {\texttt{previous\_size}};
		\node at ($(TL)+(9.5,-0.25)$) {\texttt{size}};
		
		\node[left=1cm of {$(TL)+(0,-0.25)$}] {list of $I$-simplices};
		\node[left=1cm of {$(TL)+(0,-1)$}] {\texttt{simplex\_it[0]}};
		\node[left=1cm of {$(TL)+(0,-1.5)$}] {\texttt{simplex\_it[1]}};
		\node[left=1.5cm of {$(TL)+(0,-2)$}] {$\vdots$};
		\node[left=1cm of {$(TL)+(0,-2.75)$}] {\texttt{simplex\_it[I-1]}};
		\node[left=1cm of {$(TL)+(0,-3.25)$}] {\texttt{simplex\_it[I]}};
		
		\path[browse/.style={->, shorten <=0.35cm, shorten >=0.35cm, bend right=2}]
			($(TL)+(0,-1)$) edge [browse] node[left] {} ($(TR)+(0,-1)$)
			($(TL)+(1,-1.5)$) edge [browse] node[left] {} ($(TR)+(0,-1.5)$)
			($(TL)+(3,-2.75)$) edge [browse] node[left] {} ($(TR)+(0,-2.75)$)
			($(TL)+(6,-3.25)$) edge [browse] node[left] {} ($(TR)+(0,-3.25)$);
	\end{tikzpicture}
	\caption{A diagram of the above iterator philosophy}
\end{figure}

Finally, out of all those combinations, we only consider those which satisfy the condition
\[
	\forall i \in \ldbrack 1, I-1\rdbrack,\; \texttt{simplex\_it[i]} \text{ shares } \, i \; (I-1)\text{-simplices with }\bigvee_{j<i} \texttt{simplex\_it[j]},
\]

where $\bigvee$ denotes an exclusive union.

For that reason, we numerically define $I$-simplices as their list of $(I-1)$-simplices, and not of vertices.

\begin{algorithm}[caption=Constructing the Rips Complex,mathescape]
For it[0] in the list of $I$-simplices of index previous_size to size
	For it[1] in the list of $I$-simplices of index $1$ to size-1 such that it shares $1$ $(I-1)$-simplex with it[0]
		...
			For it[i] in the list $I$-simplices of index $i$ to size-i such that it shares $i$ $(I-1)$-simplices with $\bigvee_{j<i} \texttt{simplex\_it[j]}$
				...
					For it[I] ...
						Add (it[0],..., it[I]) to the list of $(I+1)$-simplices
\end{algorithm}

The above algorithm implies defining $(I+1)$-times nested loops, for $I \in \ldbrack 1, N-1\rdbrack$. We came up with an elegant implementation of this by developing static loops using template metaprogramming in {\Cpp}.  This way, our program is capable of running for any dimension $D$ provided at compile time without being penalized for it.

	\subsubsection{Computing the Barcodes}

To compute the persistent homology and the barcodes, we introduce the total boundary matrix $M$, which is a means of assembling all the boundary maps $\delta_i$. It is a square matrix of size $\sum_{I \in \ldbrack 0, D\rdbrack} n_I$ and has its values in $\{0, 1\}$. It is defined so that $M_{i,j} = 1$ if and only if
\[
	\left\{
	\begin{aligned}
		&i \text{ and }j \text{ are respectively indices of } I \text{ and } (I+1) \text{-simplices, for some } I	\\
		&\text{ simplex } i \text{ belongs to simplex } {j}
	\end{aligned}
	\right.
\]

Everytime an $I$-simplex is created, it is indexed with respect to the total boundary matrix and $(I+1)$ values are added to the matrix. We then perform a reduction method on the  matrix.  The key point here is that we can take our homology such that the coefficients are in the field $\mathbb{Z}/2 \mathbb{Z}$, which removes any torsion groups that may arise.  This means that when we are reducing the matrix we can make every entry either a 0 or 1 at every step, which makes the computation much easier. 

Another thing to note with the reduction is that we are dealing with sparse matrices.  The original total boundary matrix is sparse as it is created in a combinatorical way, and it indeed stays sparse throughout reduction. Therefore we have coded the entire procedure for sparse matrices, making the calculation more efficient.

We perform the reuction as follows. Take the matrix $M$ and we reduce it to a matrix $R$ such that the the first 1s in all columns appear on different rows.  We do this by adding columns (and then reducing mod 2), we are not permitted to swap columns.  Once we have the matrix $R$ it is possible to see which holes were killed with the addition of a simplex, and if any holes were born.  In particular, adding $\sigma_j$ gives birth to a new homology class if column $j$ of $R$ is zero, and adding $\sigma_j$ kills a homology class if the $j$ column is non-zero in $R$.  If we denote by $R[i,j]$  its lowest 1, then we kill the homology class that was born when $\sigma_i$ was added (see \cite{theoryandprac} for more details).

Using this method, we get an $\epsilon_b$ of when a homology class is born and a $\epsilon_d$ of when the same class dies, if at all.  We can then plot these into a barcode diagram as shown in figure \ref{fig:barcode_example}.

Note that even with the improved simplex building algorithm described above, the complexity of building high-dimensional simplices is extremely high. We can tackle this by picking a maximum dimension $D << N$. An alternative we suggest here is to consider the following stopping criterion.

\begin{definition}[Connectedness stopping criterion]
	We terminate the calculations as soon as the $0$th persistant Betti number is equal to one.
\end{definition}

	\section{Using TDA to Compare Metrics for Low-Dimensional Visual Data}

In this section, we will apply the above philosophy to a data cloud in $\mathbb{R}^2$.   We will be considering the following metrics for points $A=(x_1,y_1)$ and $B=(x_2,y_2)$:
\begin{enumerate}
	\item Euclidean - $d_1(A,B)=\sqrt{(x_1-x_2)^2 + (y_1-y_2)^2}$
	\item Taxicab - $d_2(A,B) = |x_1-x_2| + |y_1 - y_2|$
	\item Supremum - $d_3(A,B)=\text{max}\{|x_1 - x_2| , |y_1 - y_2|\}$
\end{enumerate}

We create a data sample of $n$ points in $\mathbb{R}^2$ by uniformly generating them in a bounded subset parameterised by a collection of curves. Specifically for this example, we created 50 points in a region bounded by a circle with four holes, as shown in figure \ref{fig:point_cloud}.

\begin{figure}[H]
	\centering
	\includegraphics[width=0.4\textwidth]{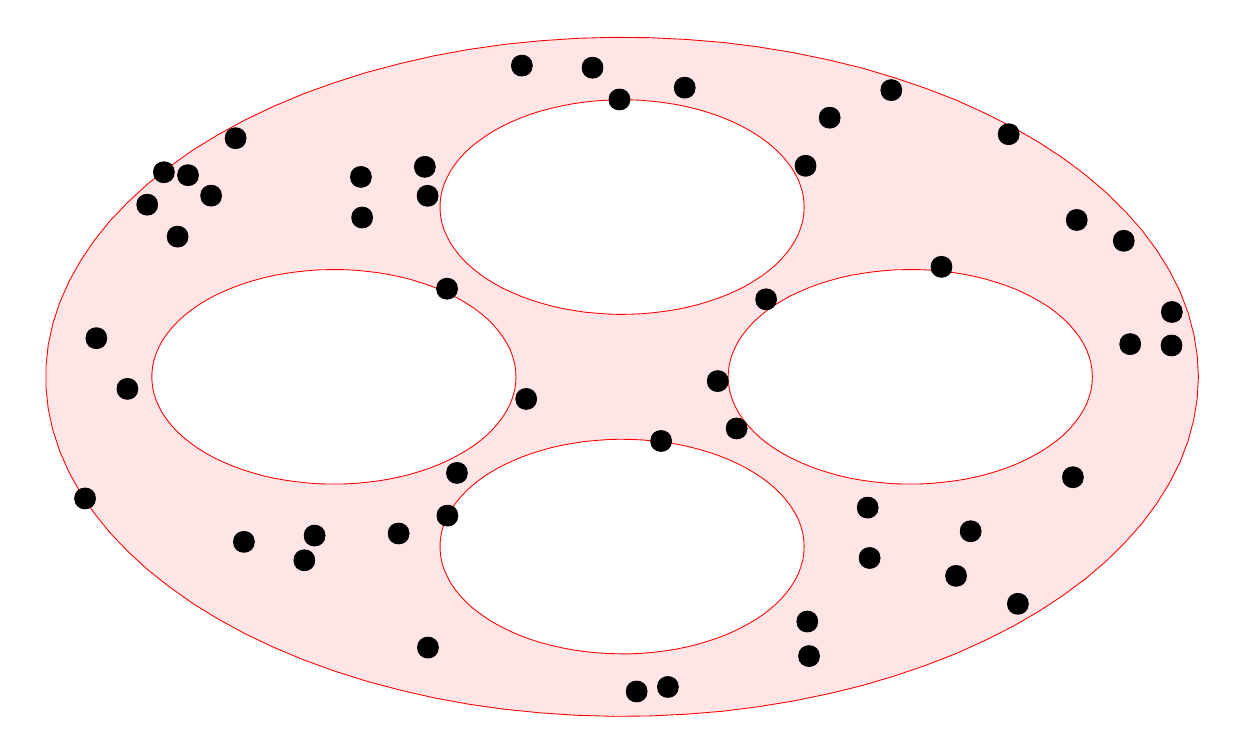}
	\caption{The data set}
	\label{fig:point_cloud}
\end{figure}

\begin{remark}
	As the data is ambient to $\mathbb{R}^2$, we will not consider simplices of dimension greater then $2$ as they will not encode any useful information. 
\end{remark}

The figures on the following pages show the progression of the simplicial complex (restricted to 0,1 and 2-simplices) that we get with respect to the different metrics on this space along with the resultant barcode of the persistent homology.  Note that we have employed the stopping criterion here, so that when the space becomes connected we stop.

We can now perform a visual comparison of the three barcodes. As expected, they are not too different for this example.  The persistent $H_0$ Betti numbers are extremely similar, the only differences appear to occur in the 1 and 2-dimensional holes.  The taxicab has a much steeper increase in the number of $1$-dimensional holes, whereas the other two have a more gradual increase, and also seem to have longer lifespans.  With respect to $H_2$, there is a hole that is picked up by all of the metrics quite early on and persists throughout.  

Table \ref{tab:point_cloud_stats} gives the average number of holes, average lifespan of holes and the minimum/maximum lifespans of holes.

\barcodestatstable{\multirow{3}{*}{Euclidian},,,\multirow{3}{*}{Taxicab},,,\multirow{3}{*}{Supremum},,}{3}
				   {images/point_cloud_50_euclidian/barcode_stats.dat}
				   {images/point_cloud_50_taxicab/barcode_stats.dat}
				   {images/point_cloud_50_supremum/barcode_stats.dat}
				   {Results for the different metrics for the considered cloud of data}
				   {tab:point_cloud_stats}

These figures validate our visual comparisons and also tell us a bit more information.  The average number of bars for the Euclidean and supremum metrics are similar, while in the taxicab metric it follows a different pattern and has a very large value at $H_1$.

We can see that for all three of the metrics, the average lifespan of a bar decreases as we increase the homology dimension. The Euclidean and supremum once again are extremely silimar.

One place where the Euclidean and taxicab metric are similar is in the maximum bar length.  It seems that the longest bar in $H_1$ is detected earlier in the supremum metric.  There are much shorter bars in the taxicab and supremum metric as compared to the Euclidean metric in the higher dimensions. This may be because the data was generated with respect to the Euclidean metric and we should expect less noise because of this.

\newpage
\barcodefigure{images/point_cloud_50_euclidian}{1}{98}{Results for the Euclidian metric}{fig:point_cloud_euclidian}
\barcodefigure{images/point_cloud_50_taxicab}{1}{119}{Results for the taxicab metric}{fig:point_cloud_taxicab}
\barcodefigure{images/point_cloud_50_supremum}{1}{101}{Results for the supremum metric}{fig:point_cloud_supremum}

	\section{Using TDA to Compare Metrics on Abstract Data}

	\subsection{Non-Transitive Dice}

In this section, we will give a brief overview on the theory of non-transitive dice. This forms a very abstract data set which we will use as an example for comparing metrics on an abstract space.

\begin{definition}
	A \emph{$n$-sided die} with values in the set $K=\ldbrack 1,  k \rdbrack$ is an ordered $n$-tuple $d = [d_1, \dots, d_n]$ where $d_i \in K$.  The collection of all such dice will be denoted $K \D(n)$.
\end{definition}

\begin{example}
	The standard $6$-sided die would be represented by $[1,2,3,4,5,6]$ and is an element of $\ldbrack 1, 6 \rdbrack \D(6)$.
\end{example}

We will only be considering a very special set of dice, namely those in $\ldbrack 1, 6 \rdbrack \D(6)$ where the sum of the number of their faces is 21, akin to the example above. We will denote this set of dice as $\mathcal{DT}(6)$.  However, everything we do in this article works in the more general case of $K \D(n)$.

\begin{definition}[Beating relations]
	Given two dice $X$ and $Y$ in $K\D(n)$, we say that $Y$ \emph{beats} $X$ if $\P(Y > X) > \frac{1}{2}$. We denote this $Y \gg X$.
\end{definition}

Dice endowed with such a beating relation have interesting properties that we wish to study.  Namely, there exists cycles of beating relations, which we call non-transitive dice. We formally outline this below.

\begin{definition}[Cycles of non-transitive dice]
	A \emph{cycle of length $r$ of non-transitive dice} is a ordered collection of dice $(X_1, \dots , X_r) \in K\D(n)$ such that:
	\begin{enumerate}
		\item $X_i \gg X_{i+1}, \,\forall 1 \leq i \leq r-1$.
		\item $X_r \gg X_1$.
	\end{enumerate}
\end{definition}

\begin{example}
	\label{example:noteqav}
	The following diagram
	\begin{center}\begin{tikzpicture}[die/.style={circle,draw,font=\footnotesize}]
		\node[die] (1) at (0, 0)		{$115555$};
		\node[die] (2) at (-1.1547, -2)	{$344444$};
		\node[die] (3) at (1.1547, -2)	{$333366$};
		
		\path[gg/.style={->, shorten <=2pt, shorten >=2pt, bend right=30, font=\footnotesize}]
			(1) edge [gg] node[left] {$\frac{2}{3}$} (2)
			(2) edge [gg] node[below] {$\frac{10}{19}$} (3)
			(3) edge [gg] node[right] {$\frac{5}{9}$} (1);
	\end{tikzpicture}\end{center}
	is a cycle of non-transitive dice in $\mathcal{DT}(6)$ of length $3$. These particular dice are called the Grime dice (see \cite{grime}).
\end{example}

\begin{definition}[Non-transitive dice]
	We say that a die $X \in K \D(n)$ is \emph{non-transitive} if it appears in any non-transitive cycle.  The subset of $K\D(n)$ consisting of all non-transitive dice is denoted $K \NTD(n)$.
\end{definition}

For our particular example of $\mathcal{DT}(6)$, we will denote the subset consisting of all non-transitive dice to be $\mathcal{NTT}(6)$.

\begin{definition}
	Let $X=[x_1, \dots, x_6]$ be a dice in $\mathcal{NTT}(6)$, then its \emph{foliation constant} is given by
	\[
		\mathfrak{f}(X) = x_1-1 + 6-x_6
	\]
\end{definition}

\begin{definition}
Let $X$ be a die in $\mathcal{NTT}(6)$.  We define the \emph{symmetry constant} of $X$, denoted $\mathfrak{s}(X)$ by the following equation:

$$\mathfrak{s}(X) = \sum^{3}_{i=1} \left(  \frac{d_i+d_{n-i}}{2} - \frac{7}{2}  \right)^2$$
\end{definition}

\begin{question}
	\label{ques}
	How can we describe the structure of the space $\mathcal{NTT}(6)$ with respect to the beating relations?
\end{question}

Our main interest will be in answering the above question. We will tackle this approximately using topological data analysis.

	\subsubsection{\mathheader{K\NTD(n)}{K-NTD(n)} as a directed graph}

We compute the space $\mathcal{NTT}(6)$, and then we represent it as a directed graph.  We create a vertex for each die, and then connect node $Y$ to node $X$ with a directed edge if $Y \gg X$. Such a graph gives us lots of information. 

\begin{example}
	Below is the directed graph associated to $\mathcal{NTT}(6)$ along with the associated beating probabilities.
	
	\begin{center}\begin{tikzpicture}[die/.style={circle,draw,font=\footnotesize}]
		\node[die] (1) at ({2*cos(0)},		{2*sin(0)})			{$333336$};
		\node[die] (2) at ({2*cos(360/7)},	{2*sin(360/7)})		{$112566$};
		\node[die] (3) at ({2*cos(2*360/7)}, {2*sin(2*360/7)})	{$144444$};
		\node[die] (4) at ({2*cos(3*360/7)}, {2*sin(3*360/7)})	{$333345$};
		\node[die] (5) at ({2*cos(4*360/7)}, {2*sin(4*360/7)})	{$222366$};
		\node[die] (6) at ({2*cos(5*360/7)}, {2*sin(5*360/7)})	{$114555$};
		\node[die] (7) at ({2*cos(6*360/7)}, {2*sin(6*360/7)})	{$234444$};
		
		\node[die] (8) at (-2.5, -2.5)							{$333444$};
		\node[die] (9) at (2.5, 2.5)							{$122556$};
		\node[die] (10) at (3.5, 3.5)							{$222555$};
		
		\path[gg/.style={->, shorten <=2pt, shorten >=2pt, bend right=12.85, font=\normalsize}]
			(1) edge [gg] node[right] {$\frac{5}{9}$} (2)
			(2) edge [gg] node[above] {$\frac{5}{9}$} (3)
			(3) edge [gg] node[above left] {$\frac{5}{9}$} (4)
			(4) edge [gg] node[left] {$\frac{5}{9}$} (5)
			(5) edge [gg] node[below left] {$\frac{5}{9}$} (6)
			(6) edge [gg] node[below] {$\frac{5}{9}$} (7)
			(7) edge [gg] node[below right] {$\frac{5}{9}$} (1)
			
			(1) edge [gg] node[right] {$\frac{5}{9}$} (9)
			(1) edge [gg] node[right] {$\frac{7}{12}$} (10)
			(9) edge [gg] node[above] {$\frac{5}{9}$} (3)
			(10) edge [gg] node[above] {$\frac{7}{12}$} (3);
		
		\path[gg/.style={->, shorten <=2pt, shorten >=2pt, bend left=12.85, font=\normalsize}]
			(6) edge [gg] node[below] {$\frac{5}{9}$} (8)
			(8) edge [gg] node[left] {$\frac{19}{36}$} (5);
	\end{tikzpicture}\end{center}
	
	We can read off the cycles, for example
	
	\begin{center}\begin{tikzpicture}[align=center, die/.style={circle,draw,font=\footnotesize}]
		\node[die] (1) at (0, 0)		{$114555$};
		\node[die] (2) at (-1.1547, -2)	{$333345$};
		\node[die] (3) at (1.1547, -2)	{$222366$};
		
		\path[gg/.style={->, shorten <=2pt, shorten >=2pt, bend right=30, font=\normalsize}]
			(1) edge [gg] node[left] {$\frac{19}{36}$} (2)
			(2) edge [gg] node[below] {$\frac{5}{9}$} (3)
			(3) edge [gg] node[right] {$\frac{5}{9}$} (1);
	\end{tikzpicture}\end{center}
	
	is a non-transitive cycle.  We computationally find the longest cycle to be a $7$-cycle given by:
	
	\begin{center}\begin{tikzpicture}[die/.style={circle,draw,font=\footnotesize}]
		\node[die] (1) at ({2*cos(0)},		{2*sin(0)})			{$333336$};
		\node[die] (2) at ({2*cos(360/7)},	{2*sin(360/7)})		{$112566$};
		\node[die] (3) at ({2*cos(2*360/7)}, {2*sin(2*360/7)})	{$144444$};
		\node[die] (4) at ({2*cos(3*360/7)}, {2*sin(3*360/7)})	{$333345$};
		\node[die] (5) at ({2*cos(4*360/7)}, {2*sin(4*360/7)})	{$222366$};
		\node[die] (6) at ({2*cos(5*360/7)}, {2*sin(5*360/7)})	{$114555$};
		\node[die] (7) at ({2*cos(6*360/7)}, {2*sin(6*360/7)})	{$234444$};
		
		\path[gg/.style={->, shorten <=2pt, shorten >=2pt, bend right=12.85, font=\normalsize}]
			(1) edge [gg] node[right] {$\frac{5}{9}$} (2)
			(2) edge [gg] node[above] {$\frac{5}{9}$} (3)
			(3) edge [gg] node[above left] {$\frac{5}{9}$} (4)
			(4) edge [gg] node[left] {$\frac{5}{9}$} (5)
			(5) edge [gg] node[below left] {$\frac{5}{7}$} (6)
			(6) edge [gg] node[below] {$\frac{5}{9}$} (7)
			(7) edge [gg] node[below right] {$\frac{5}{9}$} (1);
	\end{tikzpicture}\end{center}
	
	Note that this 7 cycle is not unique, we can replace $[1,1,2,5,6,6]$ by either $[1,1,4,4,5,6]$ or $[2,2,2,5,5,5]$, as seen in the diagram.  This occurs as all three of these dice share the same unique source and target, which limits the length of the longest cycle.
\end{example}

	\subsubsection{Distances on the Non-Transitive Dice Graph}

We can endow a distance matrix on the above graphs, which will then allow us to apply the topological data analysis tools.  Our main metric will be the so-called similarity metric which will tell us how similar two dice are, such as the three interchangeable dice in the above example.

\begin{definition}
	We define the \emph{shortest path} metric on $\mathcal{NTT}(6)$. Given two dice $X$ and $Y$ in $\mathcal{NTT}(6)$, we have
	\[
		d(X,Y) = \text{shortest path from } X \text{ to } Y + \text{shortest path from } Y \text{ to } X.
	\]
	Note that this is well defined as there is always a  path from $X$ to $Y$ by construction.
\end{definition}

\begin{proposition}
	The shortest path distance is indeed a metric on $\mathcal{NTT}(6)$.
\end{proposition}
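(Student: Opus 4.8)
The plan is to verify the three (pseudo)metric axioms by reducing them to elementary properties of the directed shortest-path function on the beating graph. Throughout, write $\vec{d}(X,Y)$ for the number of edges in a shortest directed path from $X$ to $Y$, so that the proposed distance is $d(X,Y)=\vec{d}(X,Y)+\vec{d}(Y,X)$. As noted immediately after the definition, there is always a directed path from any die to any other, so both $\vec{d}(X,Y)$ and $\vec{d}(Y,X)$ are finite; and since each counts a number of edges we have $\vec{d}(X,Y)\geq 0$, whence $d(X,Y)\geq 0$, giving the first axiom. Symmetry is immediate and is precisely the reason for symmetrising: swapping $X$ and $Y$ merely exchanges the two summands, so $d(X,Y)=\vec{d}(Y,X)+\vec{d}(X,Y)=d(Y,X)$.

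Next I would check that $d(X,Y)=0$ forces $X=Y$. Since both summands are non-negative, $d(X,Y)=0$ gives $\vec{d}(X,Y)=0$, i.e.\ a directed path of length $0$ from $X$ to $Y$. The only walk of length $0$ is the empty one, so this can occur only when $X=Y$, provided the graph has no loops. Here I would invoke irreflexivity of the beating relation: for any die $X$, a symmetry argument on two independent rolls gives $\P(X>X)=\tfrac{1}{2}\bigl(1-\P(X=X)\bigr)\leq\tfrac{1}{2}$, so $X\ngg X$ and there are no self-edges. Therefore $\vec{d}(X,Y)=0$ exactly when $X=Y$, and combined with non-negativity this yields $d(X,Y)=0\iff X=Y$.

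The substantive step is the triangle inequality, which I would obtain from the one-sided triangle inequality for the directed distance. Concatenating a shortest $X\to Y$ path with a shortest $Y\to Z$ path produces a directed walk from $X$ to $Z$ of length $\vec{d}(X,Y)+\vec{d}(Y,Z)$; since $\vec{d}(X,Z)$ is the minimum over all such walks,
\[
	\vec{d}(X,Z)\leq \vec{d}(X,Y)+\vec{d}(Y,Z),
	\qquad
	\vec{d}(Z,X)\leq \vec{d}(Z,Y)+\vec{d}(Y,X),
\]
the second inequality following identically with the roles reversed. Adding the two and regrouping the four terms into the pairs defining $d$ gives $d(X,Z)\leq d(X,Y)+d(Y,Z)$, as required. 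The only real subtlety is this regrouping: the forward inequality alone is not symmetric, and it is precisely the sum of the forward and backward directed inequalities that reassembles into the symmetric summands $\vec{d}(X,Y)+\vec{d}(Y,X)$ and $\vec{d}(Y,Z)+\vec{d}(Z,Y)$. I expect the triangle inequality to be the only step requiring any care; the remaining axioms are essentially bookkeeping once well-definedness is granted.
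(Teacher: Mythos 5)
Your proof is correct and follows essentially the same route as the paper's: non-negativity and symmetry from the definition, identity of indiscernibles from the fact that only the empty walk has length zero, and the triangle inequality by concatenating directed paths and adding the forward and backward one-sided inequalities. You are in fact more careful than the paper, which silently assumes there are no self-loops; your observation that $\P(X>X)\leq\tfrac{1}{2}$ rules these out and closes a small gap the published argument leaves open.
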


\begin{proof} \leavevmode
	\begin{enumerate}
		\item[-] $d(X,Y) \geq 0$ is clear as the shortest path between two nodes in a graph is always a positive number.
		\item[-] $d(X,Y) = 0$ if and only if $X=Y$ also holds as if the shortest path between two nodes has length 0 then it means it exactly the same node.
		\item[-] $d(X,Y)=d(Y,X)$ holds from definition.
		\item[-] $d(X,Z) \leq d(X,Y) + d(Y,Z)$ is true as we can always construct a path of equal length via concatenation.
	\end{enumerate}
\end{proof}

\begin{definition}
	We define the \emph{shortest path matrix} of $\mathcal{NTT}(6)$ to be the $n \times n$ matrix $D\big(\mathcal{NTT}(6)\big)$ whose $d_{ij}$ element is exactly $d(X_i,X_j)$.  Here $n$ is the cardinal of $\mathcal{NTT}(6)$, namely 10. 
\end{definition}

Now that we have defined the distance metric, we have a way to compare non-transitive dice.  What we do next is see how similar two dice are by comparing their distances to all other dice.

\begin{definition}
	Let $D=D\big(\mathcal{NTT}(6)\big)$ be the shortest path matrix, we define the $n \times n$ \emph{similarity matrix} $\tilde{D}$ by defining the elements
	\[
		\tilde{D}_{ij}=d_{\mathbb{R}^{n-1}}(\hat{D}_i,\hat{D}_j).
	\]
	Here $d_{\mathbb{R}^{n-1}}$ is the Euclidean metric on $\mathbb{R}^{n-1}$, $\hat{D}_i$ is the $i$-th column of $D$ with the $i$-th value removed (which will always be zero).
\end{definition}

\begin{definition}
	Two dice $X_i$ and $X_j$ in $\mathcal{NTT}(6)$ are said to be \emph{similar} if $\tilde{D}_{ij}=0$.
\end{definition}

\begin{remark}
	Note that this space is no longer a metric as it may not even be Hausdorff. Namely if we have any similar dice the distance between them will be zero.
\end{remark}

	\subsection{Comparing Metrics on $\mathcal{NTT}(6)$}

We previously defined the similarity distance on the space of non-transitive dice.  We will now introduce some other metrics and then compare the resulting barcodes.

Given two dice $X=[x_1, \dots, x_6]$ and $Y=[y_1, \dots, y_6]$:

\begin{enumerate}
	\item (Similarity Distance) $d_1(X,Y) = \tilde{D}_{XY}$
	\item (Eucldean Metric) $d_2(X,Y) = \sqrt{(x_1-y_1)^2 + \cdots + (x_6-y_6)^2}$
	\item (Foliation-Symmetry Distance) $d_3(X,Y) = |(\mathfrak{s}(X) + \mathfrak{f}(X)) - (\mathfrak{s}(Y) + \mathfrak{f}(Y))|$
\end{enumerate} 

Figure \ref{fig:barcode_ntd6_restricted} shows the barcodes associated to the space $\mathcal{NTT}(6)$ with respect to the above three metrics.  As before, we also present some of the relevant statistics that we can obtain from the barcodes in the table \ref{tab:ntd6_stats}.

	\subsection{Interpretation of Results}

The first obvious difference between the abstract space and the sample on $\mathbb{R}^2$ is that we have less critical $\epsilon$ values in the abstract case. This follows by construction as the data has no noise and less nodes.  It is also clear that we made the right decision allowing the dimension of the simplices to be unbounded, there seems to be data encoded even in the 8 and 9-dimensional components of the Rips complex.

Between the three metrics on the non-transitive dice set, there seems to be a connection between the similarity metric and Euclidean metric. It appears that one could make a connection between the two.  This would be an extremely useful thing to have as the Euclidean metric has a much lower computational complexity than the similarity distance. 

On the other hand, the foliation-symmetry distance does not seem to be too useful. There are not many critical values of $\epsilon$ and a large number of holes get born and die simultaneously.  There is also the issue that at $\epsilon=0$ there are not many connected components, this is due to the fact that the way that metric has been constructed implies that there will be many nodes such that the distance between them is zero.

We now utilize table \ref{tab:ntd6_stats} to discuss the problem further.  The first thing to notice is how the number of bars increases then decreases as we increase the dimension. There is a peak at the $H_3$ level, suggesting this is where most information about our data lies. 

The minimum bar sizes of the foliation-symmetry distance are all equal, along with most of the maximum bar sizes being equal.  This suggests that this metric is not very good at ascertaining the importance of holes, and therefore not suited to helping us describe the shape of our data.

In conclusion, it does seem that we could use the Euclidean metric on the dice sets to at least estimate what sort of shape the directed graph with respect to the similarity metric should look like.

\barcodestatstable{\multirow{8}{*}{Similarity},,,,,,,,\multirow{8}{*}{Euclidian},,,,,,,,\multirow{8}{*}{Foliation-Symmetry},,,,,,,}{8}
				   {data/similarity_barcode_stats.dat}
				   {data/euclidian_barcode_stats.dat}
				   {data/balchin_barcode_stats.dat}
				   {Results for the different distances on $\mathcal{NTT}(6)$}
				   {tab:ntd6_stats}

\section{Conclusion}

We have suggested a philosophy that topological data analysis has tools that can be applied to the problem of choosing what is a good distance on an arbitrary space.  Analyzing the results is no easy task, we chose to implement a combination of statistics and visual observations to conclusions.  We believe that there should be an easier and more accurate way to compare barcodes that tell us things about the structure in each dimension.  If such a tool were to be developed, it would most definitely strengthen the theory that has been introduced in this paper.

Our reasoning may also help us justify swapping out the true metric on a space for an approximate one with lower complexity, this would aid in high-end calculations.  In our example, we have noted that the Euclidean distance could be adjusted to approximate the similarity distance on $\mathcal{NTT}(6)$. Not only is this of lower complexity, but it has also allowed us to draw conclusions about abstract spaces such as the non-transitive dice sets.

\begin{figure}[H]
	\centering
	\begin{subfigure}{.33\textwidth}%
		\includegraphics[height=\textheight]{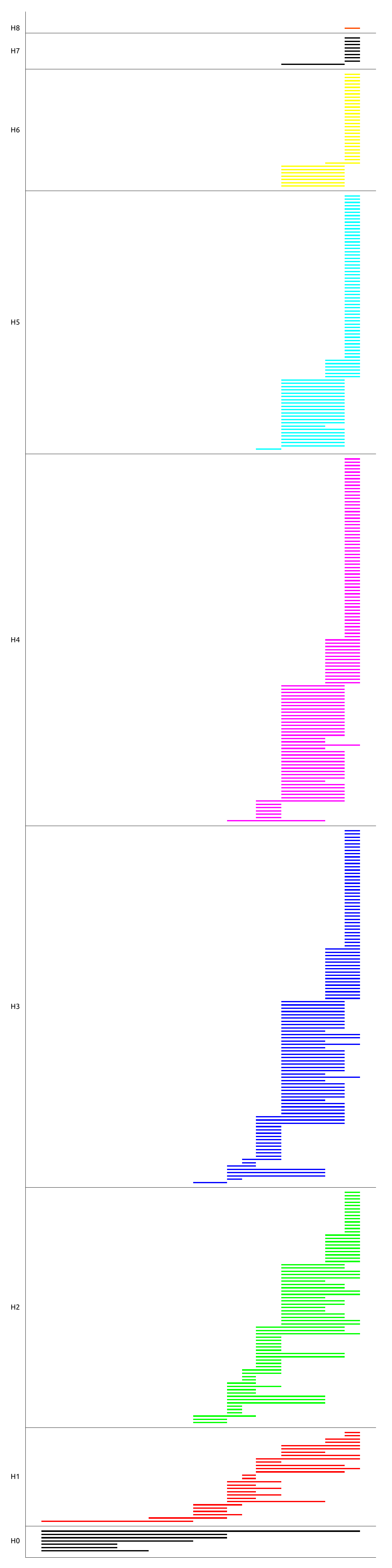}%
		\caption{Similarity metric}%
	\end{subfigure}%
	\begin{subfigure}{.33\textwidth}%
		\includegraphics[height=\textheight]{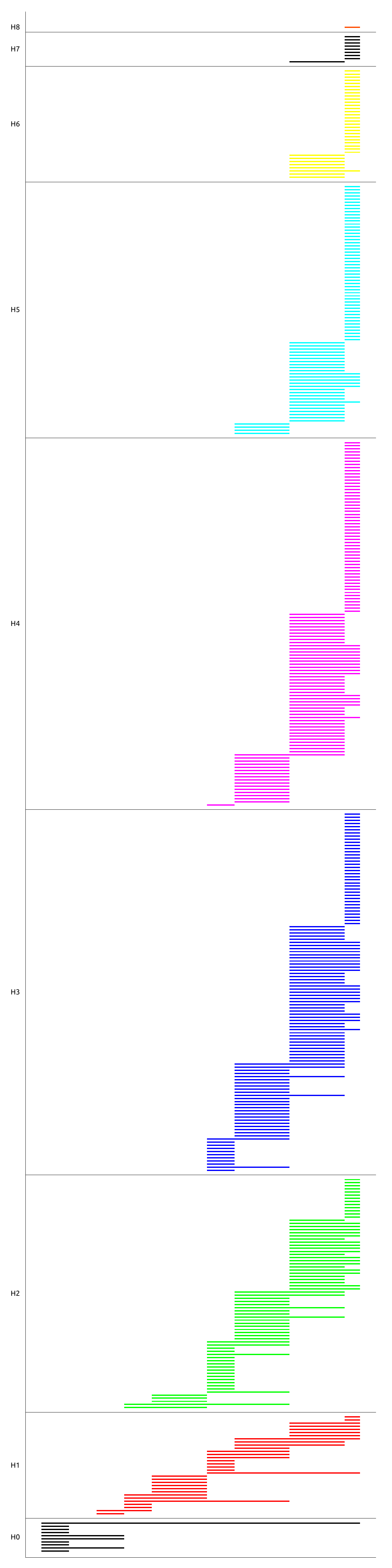}%
		\caption{Euclidian metric}%
	\end{subfigure}%
	\begin{subfigure}{.33\textwidth}%
		\includegraphics[height=\textheight]{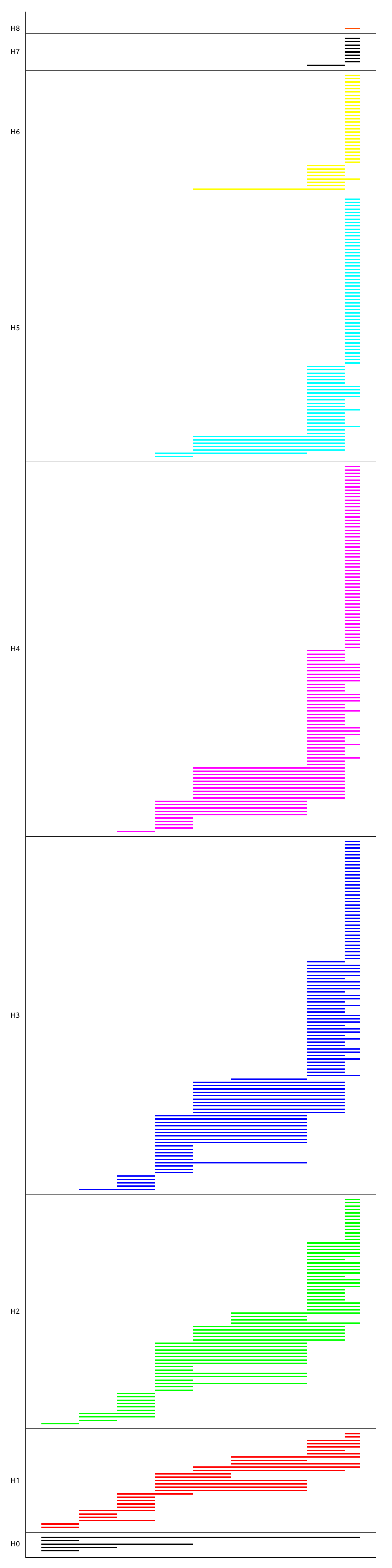}
		\caption{Foliation-Symmetry metric}
	\end{subfigure}%
	\caption{Barcdoes of $\mathcal{NTT}(6)$}
	\label{fig:barcode_ntd6_restricted}
\end{figure}

\bibliographystyle{plain}
\bibliography{tda}
\end{document}